\theoremstyle{definition}
\newtheorem{defi}{Definition}[subsection]
\newtheorem{rem}[defi]{Remark}
\newtheorem{prop}[defi]{Proposition}
\newtheorem{lem}[defi]{Lemma}
\newtheorem{theo}[defi]{Theorem}
\newcommand{\AFun}{\operatorname{A}_{\infty}\operatorname{Fun}^{\circ}}
\newcommand{\A}{\mathcal{A}}
\newcommand{\B}{\mathcal{B}}
\newcommand{\dgcat}{\mathsf{DGCat}(k)}
\newcommand{\co}{\colon\thinspace}
\title{A note on a Holstein construction}
\author{Sergey Arkhipov}
\author{Daria Poliakova}
\begin{document}

\begin{abstract}
We clarify details and fill certain gaps in the construction of a canonical Reedy fibrant resolution for a
constant simplicial DG-category due to Holstein. 
\end{abstract}

\maketitle

\tableofcontents

\section{Introduction}
The present paper grew out of attempts to understand technical details of a proof in \cite{Hols}. Thus, from the very start, we do not claim that our work contains original  insights.\\

We begin by describing our interest. In the papers \cite{BHW}, \cite{AO} homotopy gluing of DG-categories was studied. \\

The standard example is given by Abelian categories of sheaves 
on open sets for a {\v C}ech covering of a topological space. One seeks a lift for gluing of Abelian categories  to DG-level. Unlike with ordinary categories, one requires coherence data on multiple intersections in the covering to be given by weak equivalences, not by isomorphisms. The answer is spelled out naturally in the language of homotopy limits for cosimplicial diagrams of DG-categories. In \cite{AO} Sebastian {\O}rsted  and the first author provided an explicit model for such a homotopy limit. \\

The construction relies on an explicit model for {\em{powering}}
by simplicial sets in the model category of DG-categories due to Holstein (see \cite{Hols}, Proposition 3.6). The key ingredient in the latter is a canonical simplicial resolution of a DG-category introduced in the same paper (see \cite{Hols}, Propositions 3.9 and 3.10). Our goal in the present paper is to add details to the sketch of the proofs of those statements in Holstein's work. \\

The author's strategy in that paper was to generalize a proof of Tabuada that a certain explicit DG-category
provides a {\em{path object}} construction (see \cite{Tab2}, Proposition 3.3). However, the original proof of Tabuada had some details omitted,  which led to a flaw in Holstein's approach. We fill the gap, and this, together with certain explicit calculations, is the main content of the present note.\\

Let us outline the structure of the paper. In the second section we recall the construction
of Dwyer-Kan model structure on the category of DG-categories. Then, following Lef{\`e}vre-Hasegawa \cite{LH} and Faonte \cite{faonte},  we discuss
 close relatives of DG-functors called $A_\infty$-functors. We describe the category of $A_\infty$-functors between two DG-categories playing the role of internal Hom in the category of DG-categories.
We conclude the section by recalling  the Reedy model structure on  a diagram category with values in a model category. In particular this includes our main object of interest -- the category of simplicial DG-categories. \\

In the third section we provide a detailed proof of Holstein's theorem filling the gap in his original approach. In particular, the proof of fibrancy of matching maps is given by explicit lifts. \\

Our proof is based on direct calculations of  lifts and on the use of an elegant description for homotopy equivalences of  $A_\infty$-functors suggested to us by Efimov. In the appendix, we provide an alternative approach to the proof developing the ideas of Tabuada and Holstein. The
main strategy there is to reduce the statement to the case of pretriangulated DG-categories via the construction of pretriangulated envelope.
\vskip 2mm
\noindent 
{\bf Acknowledgements.} The gaps in the last part of the paper \cite{Hols} were noticed by several people, in particular, by Boris Shoikhet. We thank Boris Shoikhet for sharing his concerns at an early stage of the present work. We also thank Julian Holstein for stimulating discussions. \\

The idea to study pointwise homotopy equivalences of A-infinity functors is 
due to Alexander Efimov, the context and the exact reference were kindly provided by Sebastian {\O}rsted. This improvement clarified and simplified the
exposition greatly, thus the final form of the paper owes a lot to Efimov and {\O}rsted. We thank Edouard Balzin for careful proofreading of the text, and Timothy Logvinenko for useful comments. \\

The first author was partially supported by QGM. The second author was partially supported by Laboratory of Mirror Symmetry NRU HSE, RF Government grant, ag. N 14.641.31.0001. The second author was also supported by the Danish National Research Foundation through the Centre for Symmetry and Deformation (DNRF92).

\section{Homotopy theory of DG-categories}
Below we collect a few constructions and statements to be used in the next section and necessary
to formulate the theorem of Holstein. We work over a base field $k$. Recall that a DG-category is a category enriched over the monoidal category ${\mathsf{Com}}(k{\mathsf{-Mod}})$. The homotopy category for a DG-category $\mathcal A$ is denoted by $H^0({\mathcal{A}})$.
We denote the category of small DG-categories and DG-functors by $\dgcat$.
\subsection{Dwyer-Kan model structure for DG-categories}

Recall that a DG-functor is called a quasiequivalence, if it induces quasiisomorphisms on all Hom complexes and becomes an equivalence of the homotopy categories. Quasiequivalences are a part of {\it Dwyer-Kan} model structure on ${\mathsf{DGCat}}(k)$  constructed in \cite{Tab}. Recall the description of the three standard classes of morphisms.\\

We say that a DG-functor $F\co \A \to \mathcal{D}$ is

\begin{itemize}

\item a weak equivalence, if it is a quasiequivalence

\item a fibration, if it is surjective on all $\operatorname{Hom}$ complexes and is an isofibration at the level of $H^0$, i.e. for a homotopy equivalence $F(x) \xrightarrow{u} y$ in $\mathcal{D}$ there exists a homotopy equivalence $x \xrightarrow{u'} y'$ such that $F(u') = u$:

$$\xymatrix{\A \ar[d]^-F & x \ar@{|-->}[d] \ar@{-->}[r]^{u'} & y' \ar@{|->}[d] \\
 \mathcal{D} & F(x) \ar[r]^{u} & y } $$

\item a cofibration, if it admits the left lifting property with respect to all trivial fibrations.
\end{itemize}

\begin{theo}
The category ${\mathsf{DGCat}}(k)$ is equipped with cofibrantly generated model structure with weak equivalences, fibrations and cofibrations defined as above. 
\end{theo}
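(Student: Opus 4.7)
The plan is to apply Kan's recognition theorem for cofibrantly generated model structures. Concretely, I would produce an explicit set $I$ of generating cofibrations and an explicit set $J$ of generating trivial cofibrations, check smallness of their domains, and then verify that the class of $J$-injective maps coincides with the prescribed fibrations and that the class of $I$-injective maps coincides with the trivial fibrations (i.e.\ fibrations that are also quasiequivalences). Once these matchings hold, the two-out-of-three and retract axioms for quasiequivalences are routine, and the cofibration class is defined by the lifting property anyway.

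For $I$, I would take the DG-analogue of the standard sphere-disk inclusions: the unique functor $\emptyset \to k$ (where $k$ denotes the DG-category with one object and $k$ as endomorphisms, needed to get essential surjectivity by pushing in new objects), together with, for each $n \in \mathbb{Z}$, the inclusion of the DG-category freely generated by a closed degree $n{-}1$ morphism into the one freely generated by a degree $n$ morphism whose differential kills it. For $J$, I would take the inclusions of the zero DG-category into the contractible disk-like DG-categories (encoding surjectivity on Hom complexes), plus one further generator: the inclusion $k \sqcup k \hookrightarrow \mathcal{K}$ where $\mathcal{K}$ is a chosen cofibrant DG-category with two objects whose $H^0$ is the free groupoid on a single invertible arrow. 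This last generator is designed precisely so that $J$-injectivity encodes the lifting of homotopy equivalences demanded by the definition of fibration.

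Next I would verify the matching statements. That $I$-injective equals trivial fibration is the easier half: surjectivity on Hom complexes is equivalent to lifting against the sphere-disk pairs, and essential surjectivity (the remaining part of being a quasiequivalence once the Hom complexes are surjective quasi-isomorphisms) is equivalent to lifting against $\emptyset \to k$. The matching for $J$ is direct from the construction of $\mathcal{K}$. Smallness holds because all domains in $I$ and $J$ have only finitely many objects and finitely generated Hom complexes, hence are $\aleph_0$-small with respect to filtered colimits in $\dgcat$.

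The main obstacle, as always in this theorem, is showing that every relative $J$-cell complex is a weak equivalence; equivalently, that pushouts along the generator $k \sqcup k \hookrightarrow \mathcal{K}$ preserve quasiequivalences and that transfinite compositions of such pushouts stay quasi-equivalences. This requires an explicit description of the pushout $\A \sqcup_{k \sqcup k} \mathcal{K}$ in $\dgcat$, identifying its Hom complexes as iterated tensor products, and a spectral-sequence or filtration argument to see that adjoining a formal homotopy equivalence does not change the quasi-isomorphism type when the original arrow was already a homotopy equivalence up to a contractible factor. This is the step worked out by Tabuada in \cite{Tab}, and I would essentially follow that calculation, being careful about the interaction of the differential on $\mathcal{K}$ with the bar-like resolution of the pushout.
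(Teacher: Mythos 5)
Note first that the paper does not prove this statement at all: it is recalled verbatim from Tabuada \cite{Tab}, so the only thing to compare your proposal with is Tabuada's own argument, whose overall shape (Kan's recognition theorem with explicit generating sets $I$ and $J$, smallness, and the hard step that relative $J$-cell complexes are quasiequivalences) you have correctly identified. However, your generating set $J$ is wrong in a way that breaks the argument. The map $k \sqcup k \hookrightarrow \mathcal{K}$ is not even a weak equivalence (it is not quasi-fully-faithful: $\operatorname{Hom}(1,2)$ goes from $0$ to a complex with $H^0=k$), so it cannot be a generating trivial cofibration, and $J$-injectivity with respect to it does not match the fibrations as defined. The point is that the definition of fibration only prescribes the \emph{source} of the lifted homotopy equivalence and allows the target to be any object over $y$, whereas lifting against $k\sqcup k \to \mathcal{K}$ pins down both endpoints; this is strictly stronger. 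For instance, the $k$-linearization $\A$ of the free category on two objects $x,x'$ and arrows $v\co x\to x'$, $\bar v\co x'\to x$ maps onto the one-object category $k$ surjectively on all Hom complexes and is an isofibration on $H^0$ (scalars lift to scalar multiples of identities), hence is a fibration, yet there is no homotopy equivalence $x\to x'$ at all, so the lifting problem against $k\sqcup k \to \mathcal{K}$ with these endpoints has no solution. The correct generator is the one-object inclusion $k \to \mathcal{K}$. Similarly, your disk-type generators must have domain $k\sqcup k$ (two objects with no morphisms between them), not ``the zero DG-category'': lifting against $\emptyset \to \mathcal{P}(n)$ says nothing about surjectivity of $\A(x,y)\to \mathcal{D}(Fx,Fy)$ for prescribed $x,y$, and again fails to be a quasiequivalence if the domain is empty, while $k\sqcup k\to\mathcal{P}(n)$ is one because the disk complex is acyclic.

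Two further points. Describing $\mathcal{K}$ only as ``cofibrant with $H^0$ the free groupoid on one invertible arrow'' underdetermines it and hides the actual content: one needs $\mathcal{K}$ to corepresent exactly the data $(f,g,r_X,r_Y,r_{XY})$ of Lemma \ref{klemma}, so that every homotopy equivalence in the base extends to a functor out of $\mathcal{K}$, and one needs the inclusion $k\to\mathcal{K}$ to be a quasiequivalence; this is precisely where the extra generator $r_{XY}$ matters, since the naive ``free homotopy equivalence'' on $f,g,r_X,r_Y$ alone has the wrong quasiequivalence type. (Also, RLP against $\emptyset\to k$ is strict surjectivity on objects, not essential surjectivity; the combined characterization of trivial fibrations is still correct, but not as you phrased it.) Finally, the genuinely hard step --- that pushouts along $k\to\mathcal{K}$ and their transfinite compositions are quasiequivalences --- is exactly the core of Tabuada's proof, and your proposal defers it entirely to ``follow Tabuada''; as a pointer this matches what the paper itself does by citing \cite{Tab}, but as a proof it is incomplete at its critical point.
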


\subsection{$A_\infty$ functors as inner $\operatorname{Hom}$} In $\dgcat$, one can take the naive tensor product $\A \otimes \mathcal{D}$ and the naive inner $\operatorname{Hom}$ $\operatorname{DGFun}(\A,\mathcal{D})$ which make $\dgcat$ into a closed monoidal category. However, these notions are not consistent with the model structure discussed above, and thus do not make $Ho\dgcat$ into a closed monoidal category. This can be amended by considering derived versions, $\otimes^L$ and $\operatorname{RHom}$ (see \cite{toen}),  which are defined up to quasiequivalence but which make $Ho\dgcat$ into a closed monoidal category. \\

Of existing models for $\operatorname{RHom}$, we make use of the one given by the DG-category of $A_\infty$-functors.

\begin{defi}
For two DG-categories $\A$, $\B$, a strictly unital $A_\infty$ functor $F\co \A \to \B$ consists of the following data:

\begin{itemize}
    \item $F_0: Ob\A \to Ob\B$
    \item for all $n \geq 1$ and $x_0, \ldots, x_n \in Ob\A$, $$F_n: \A(x_{n-1},x_n) \otimes \ldots \otimes \A(x_0,x_1) \to B(F_0(x_0),F_0(x_n))$$
    of degree $1-n$, subject to 
\begin{gather*} \sum_{s=0}^{n-2} (-1)^{n-s} F_{n-1}(Id^{\otimes s} \otimes m \otimes Id^{\otimes (n-s-2)}) \\ + \sum_{s=0}^{n-1} (-1)^{n-1} F_n (Id^{\otimes s} \otimes d \otimes Id^{\otimes(n-s-1) }) \\ = d F_n + \sum_{s=1}^{n-1}(-1)^{ns}m(F_s \otimes F_{n-s}), 
\end{gather*}

where $d$ is the differential and $m$ is the composition.
\end{itemize}
\end{defi}

\begin{defi}
For two DG-categories $\A$, $\B$, the DG-category $A_\infty\operatorname{Fun}(\A,\B)$ has strictly unital $A_\infty$ functors as objects. For $F$, $G$ being such, the complex $A_\infty\operatorname{Fun}(\A,\B)(F,G)$ is, in degree $l$,
$$ \prod_{\substack{n \geq 0 \\ x_0, \ldots, x_n \in Ob(\A)}} \operatorname{Hom}(\A(x_{n-1},x_n) \otimes \ldots \A(x_0,x_1), \B(F_0(x_0),G_0(x_n))[l-n])$$ 
For $a \in A_\infty\operatorname{Fun}^l(\A,\B)(F,G)$, its differential $d_{A_\infty}(a)$ has its component at $(x_0, \ldots, x_n)$ equal to 
\begin{gather*}
    \pm d(a_{x_0, \ldots, x_n})+ \sum_{i=1}^n\pm m(a_{x_i, \ldots, x_n} \otimes F_{x_0, \ldots, x_n}) \\ + \sum_{i=0}^{n-1} \pm m(G_{x_i, \ldots, x_n} \otimes a_{x_0, \ldots, x_i}) \\ + \sum_{i=0}^{n-1} \pm a_{x_0, \ldots, x_n}(Id^{\otimes i } \otimes d \otimes Id^{\otimes (n-i-1)})\\ + \sum_{i=0}^{n-2} \pm a_{x_0, \ldots, \widehat{x_i}, \ldots, x_n}(Id^{\otimes i }\otimes m \otimes Id^{\otimes (n-i-2)}) 
\end{gather*}

\end{defi}

The definitions above are a special case of the general theory of $A_\infty$ categories and their morphisms. The discussion in full generality and including sign conventions can be found e.g. in \cite{LH}. \\

In \cite{faonte}, the following theorem is proved. 

\begin{theo}
The DG-category $A_\infty\operatorname{Fun}(\A,\B)$ is a model for $\operatorname{RHom}(\A,\B)$.
\end{theo}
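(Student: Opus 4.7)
The plan is to exhibit an explicit cofibrant replacement $Q\A \to \A$ in the Dwyer-Kan model structure on $\dgcat$ such that the naive DG-functor category $\operatorname{DGFun}(Q\A,\B)$ is \emph{isomorphic}, as a DG-category, to $A_\infty\operatorname{Fun}(\A,\B)$. Since $\operatorname{RHom}(\A,\B)$ is computed, up to quasi-equivalence, as $\operatorname{DGFun}(Q\A,\B)$ for any cofibrant resolution $Q\A$, this immediately gives the claim.

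The natural candidate for $Q\A$ is the bar--cobar resolution $\Omega B \A$. Concretely, one first forms the bar construction $B\A$: a coaugmented DG-cocategory whose underlying graded cocategory is the (reduced) tensor cocategory on the shifted hom complexes $\A[1]$, equipped with the standard coderivation assembling the differential and composition in $\A$. The cobar construction $\Omega B \A$ is then the free DG-category on the underlying graded quiver of $B\A$, with derivation dual to the bar coderivation. By direct unpacking, specifying a DG-functor $\Omega B \A \to \B$ amounts to specifying a collection of maps from iterated tensors of hom complexes of $\A$ into hom complexes of $\B$; the condition that this assignment commutes with differentials and composition in $\Omega B \A$ is \emph{exactly} the $A_\infty$-functor relation stated in the definition. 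A parallel unpacking identifies the Hom complexes of the two candidate DG-categories, differentials included, by the very formulas that define $d_{A_\infty}$.

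It remains to verify that the canonical counit $\varepsilon \colon \Omega B \A \to \A$ is a cofibrant replacement. Cofibrancy follows from the semi-free nature of $\Omega B \A$: it is freely generated over $\operatorname{Ob}\A$ by a filtered graded quiver, and a transfinite-composition argument presents it as an iterated pushout of generating cofibrations from \cite{Tab}. For the quasi-equivalence, one observes that $\varepsilon$ is the identity on objects and that on each hom complex it reduces to the classical bar--cobar map, which is a quasi-isomorphism by the usual contracting homotopy argument applied hom-wise; checking essential surjectivity on $H^0$ is then automatic.

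I expect the principal obstacle to lie in the careful bookkeeping of signs and the correct treatment of strict unitality. The bar construction must be taken in its reduced form so that units of $\A$ are not recorded as extra generators of $\Omega B \A$; otherwise $\operatorname{DGFun}(\Omega B \A,\B)$ would compute all (not strictly unital) $A_\infty$-functors. Once unitality is handled, the agreement between the derivation on $\operatorname{DGFun}(\Omega B \A,\B)$ and the explicit formula for $d_{A_\infty}$ is essentially a verification that the Koszul sign conventions of \cite{LH} used in defining $A_\infty\operatorname{Fun}(\A,\B)$ are compatible with those built into the bar--cobar machinery, a bookkeeping task rather than a genuinely new argument.
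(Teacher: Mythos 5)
The statement you are proving is not proved in this paper at all: it is quoted from Faonte \cite{faonte}, whose argument compares $A_\infty\operatorname{Fun}(\A,\B)$ with To\"en's explicit model of $\operatorname{RHom}$ by quasi-representable bimodules \cite{toen}, not with a strict functor category out of a cofibrant resolution. Your route has two genuine gaps. First, the opening premise -- that $\operatorname{RHom}(\A,\B)$ is computed by $\operatorname{DGFun}(Q\A,\B)$ for \emph{any} cofibrant resolution $Q\A$ -- is not available and is false as stated. The Dwyer--Kan model structure of \cite{Tab} is not a monoidal model category, so the internal Hom cannot be derived by resolving the source; this is precisely why To\"en constructs $\operatorname{RHom}$ via bimodules, and why the identification of $\operatorname{RHom}(\A,\B)$ with a dg-functor category out of a (carefully chosen) resolution is itself a nontrivial theorem (Canonaco--Stellari), not a formality one may invoke in one line. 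A concrete counterexample to your premise: $\A=k[1]$ is already cofibrant, yet $\operatorname{DGFun}(k[1],\B)$ has as morphisms strictly commuting squares, and if $\B$ contains a nonzero exact closed degree-$0$ morphism $f\colon A\to B$ with $\B^{-1}(A,A)=\B^{-1}(B,B)=0$, the object $(A\xrightarrow{f}B)$ has endomorphism ring $k$ and is not isomorphic to $(A\xrightarrow{0}B)$ in $H^0$, whereas in $\operatorname{RHom}(k[1],\B)$ (equivalently in $A_\infty\operatorname{Fun}(k[1],\B)$) these objects are isomorphic with endomorphisms $k^2$; so the two homotopy categories differ.

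Second, the claimed isomorphism $\operatorname{DGFun}(\Omega B\A,\B)\cong A_\infty\operatorname{Fun}(\A,\B)$ holds only on objects. A degree-$l$ morphism $F\to G$ in $\operatorname{DGFun}(\Omega B\A,\B)$ is a family $(\eta_x)_{x\in Ob\A}$ subject to strict graded naturality against every generator $s^{-1}[a_n|\cdots|a_1]$, i.e.\ an end (a subcomplex of $\prod_x\B(F_0x,G_0x)$), whereas a degree-$l$ element of $A_\infty\operatorname{Fun}(\A,\B)(F,G)$ is the unconstrained product of multilinear components over all $n\ge 0$ with the bar-type differential $d_{A_\infty}$; these complexes are not isomorphic and in general not even quasi-isomorphic. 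Your own reduced-bar convention makes this visible: for $\A=k[1]$ one has $\Omega B\A\cong k[1]$, so your claim would force $\operatorname{DGFun}(k[1],\B)\cong A_\infty\operatorname{Fun}(k[1],\B)$, contradicted by the example above. Indeed, the remark in the paper following Lemma \ref{technical}, that the lemma fails for the naive $\operatorname{DGFun}$, is a symptom of exactly this discrepancy between strict and $A_\infty$ natural transformations. Repairing your argument would require both a correct form of the ``$\operatorname{RHom}$ via resolutions'' statement and a genuine comparison of strict natural transformations out of the resolution with $A_\infty$ natural transformations -- at which point one is essentially redoing Faonte's proof rather than shortcutting it.
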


\subsection{Reedy model structure for diagrams}
To talk about (co)simplicial DG-categories, we need the following technique (see \cite{Hir} or \cite{Hov}).

\begin{defi}

A Reedy category is a category $\mathcal{I}$ together with a degree function $d\co Ob(\mathcal{I}) \to \lambda$ (where $\lambda$ is an ordinal, typically $\mathbb{N}$) and with two full subcategories $\mathcal{I}^+$ and $\mathcal{I}^-$, subject to the following conditions:

\begin{itemize}

\item every non-identity map in $\mathcal{I}^+$ increases the degree;

\item  every non-identity map in $\mathcal{I}^-$ decreases the degree;

\item every map $f$ in $\mathcal{I}$ admits a unique factorization $f = f^+ \circ f^-$, where $f^- \in \mathcal{I}^-$ and $f^+ \in \mathcal{I}^+$.

\end{itemize}

\end{defi}


The {\em simplicial category} $\Delta$  of finite ordinals and order preserving maps is an example of a Reedy category  -- in its case, $d([n])=n$, $\Delta^+$ consists of injections and $\Delta^-$ consists of surjections. Also, for $\mathcal{I}$ a Reedy category, $\mathcal{I}^{\mathrm{op}}$ is also a Reedy category with the same degree function, with $(\mathcal{I}^{\mathrm{op}})^+ = (\mathcal{I}^-)^{\mathrm{op}}$ and with $(\mathcal{I}^{\mathrm{op}})^-=(\mathcal{I}^+)^{\mathrm{op}}$. \\

For a Reedy category $\mathcal{I}$ and an arbitrary model category $\mathcal{M}$, the diagram category $\mathcal{M}^{\mathcal{I}}$ is equipped with Reedy model structure. We need the following definitions to describe it.

\begin{defi} 

\begin{enumerate}

\item For $i \in \mathcal{I}$, the latching category $\delta(I^+\downarrow i)$ is a full subcategory of the overcategory $(\mathcal{I}^+\downarrow i)$ consisting of all arrows except for $id_i$.

\item For $i \in \mathcal{I}$ and $D \in \mathcal{M}^{\mathcal{I}}$, the corresponding latching object is

$$L_iD = \operatornamewithlimits{colim}_{j \to i \in \delta(I^+\downarrow i)}D(j). $$

\item Dually, for $i \in \mathcal{I}$, the matching category $\delta(i\downarrow \mathcal{I}^-)$ is a full subcategory of the undercategory $(i\downarrow \mathcal{I}^-)$ consisting of all arrows except for $id_i$.

\item For $i \in \mathcal{I}$ and $D \in \mathcal{M}^{\mathcal{I}}$, the corresponding matching object is

$$M_iD = \lim_{i \to j \in \delta(i\downarrow \mathcal{I}^-)}D(j). $$

\end{enumerate}

\end{defi}
\noindent
Note that there are natural maps $L_iD \xrightarrow{l_iD} D(i) \xrightarrow{m_iD} M_iD$, and, for a map of diagrams $f\co D \to D'$, maps $L_i(f)\co L_iD \to D_iD'$ and $M_i(f)\co M_iD \to M_iD'$. 
Let us say that a map of diagrams $f\co D \to D'$ is

\begin{itemize}

\item a Reedy weak equivalence, if $\forall i \in \mathcal{I}$ the map $f_i\co D(i) \to D'(i)$ is a weak equivalence in $\mathcal{M}$

\item a Reedy cofibration, if $\forall i \in \mathcal{I}$, the arrow $$l_if\co L_i(D') \coprod\limits_{L_iD}D(i) \to D'(i)$$ is a cofibration in $\mathcal{M}$:

$$\xymatrix{L_i(D) \ar[r]^{L_i(f)} \ar[d]_{l_iD} & L_i(D') \ar[d] \ar@/^1.3pc/[ddr]^{l_iD'} & \\
D(i)\ar[r] \ar@/_1.3pc/[rrd]_{f_i} & L_i(D') \coprod\limits_{L_iD}D(i) \ar@{-->}[rd]^{l_if}&  \\
& & D'(i)}$$

\item a Reedy fibration, if $\forall i \in \mathcal{I}$ the arrow $$m_if\co D(i) \to M_i(D) \underset{M_i(D')}{\times} D'(i)$$ is a fibration in $\mathcal{M}$:

$$\xymatrix{D(i) \ar@/^1.3pc/[rrd]^{f_i} \ar@/_1.3pc/[rdd]_{m_iD} \ar@{-->}[rd]^{m_if} & & \\
& M_i(D) \underset{M_i(D')}{\times} D'(i) \ar[d] \ar[r] &D'(i)\ar[d]^{m_iD'} \\
&M_i(D)\ar[r]^{M_i(f)} & M_i(D')} $$

\end{itemize}
\begin{theo}
 The three classes of morphisms  define  a model structure on the category $\mathcal{M}^\mathcal{I}$.
\end{theo}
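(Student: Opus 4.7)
The plan is to verify the model-category axioms for the three specified classes on $\mathcal{M}^\mathcal{I}$. Existence of all small limits and colimits is inherited from $\mathcal{M}$ pointwise. The 2-out-of-3 property and closure under retracts for Reedy weak equivalences reduce immediately to the corresponding properties in $\mathcal{M}$, since both conditions are tested pointwise. Closure under retracts for Reedy (trivial) fibrations and cofibrations follows from the functoriality of the latching and matching object constructions, which preserve retracts.

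The central technical ingredient I would establish first is the characterization lemma: a Reedy fibration $f$ is a weak equivalence if and only if each matching map $m_i f$ is a trivial fibration in $\mathcal{M}$, and dually a Reedy cofibration is a weak equivalence if and only if each latching map $l_i f$ is a trivial cofibration in $\mathcal{M}$. One direction is straightforward from the definitions, while the other goes by induction on $d(i)$: assuming each $m_i f$ is a trivial fibration, one shows that $f_i$ is a weak equivalence using that $f_i$ factors through the pullback $M_i(D) \times_{M_i(D')} D'(i)$, whose projections are built from data of strictly lower degree and are already known to be weak equivalences by the induction hypothesis combined with the 2-out-of-3 property.

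With the characterization lemma in hand, the factorization and lifting axioms are verified by induction on degree. For a factorization $f = p \circ i$ of the required type, suppose $p$ and $i$ have been constructed on all objects of degree less than $n$; at degree $n$ one obtains a canonical map in $\mathcal{M}$ between a pushout built from lower-degree latching data and a pullback built from lower-degree matching data, and applies the functorial factorization in $\mathcal{M}$ to it. This yields simultaneously the value at degree $n$ and the latching/matching factorization structure needed to extend the diagrams. The lifting axiom is handled analogously: a lifting problem in $\mathcal{M}^\mathcal{I}$ is solved degree by degree, reducing at each stage to a lifting problem in $\mathcal{M}$ between latching and matching data, solvable by the characterization lemma together with the lifting axiom in $\mathcal{M}$.

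The main obstacle will be the careful bookkeeping for the inductive construction. One needs to check that the value at degree $n$, together with the latching and matching maps produced by the factorization in $\mathcal{M}$, really assemble into a genuine diagram over $\mathcal{I}$, and that the resulting map has the prescribed latching or matching behaviour on all morphisms of $\mathcal{I}$. Here the Reedy factorization $f = f^+ \circ f^-$ of morphisms in $\mathcal{I}$ is what allows one to reconstruct the action of an arbitrary morphism from the separately constructed latching and matching data; verifying compatibility of the two at each inductive step is the essential technical content of the argument.
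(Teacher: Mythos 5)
This statement is not proved in the paper at all: it is the classical Reedy model structure theorem, recalled with citations to Hirschhorn and Hovey, and your proposal follows the standard textbook argument in outline --- pointwise (co)limits, pointwise verification of 2-out-of-3 and retracts, a characterization of Reedy trivial (co)fibrations via relative matching (latching) maps, and degreewise induction for the factorization and lifting axioms, with the unique factorization $f = f^+\circ f^-$ in $\mathcal{I}$ guaranteeing that the degreewise constructions assemble into actual diagrams. So the architecture is the right one.

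However, one step of your sketch, as stated, would fail, and it is precisely the crux of the theorem. In the hard direction of your characterization lemma you argue that $f_i$ is a weak equivalence because it factors through $M_i(D)\times_{M_i(D')}D'(i)$, whose projections are ``already known to be weak equivalences by the induction hypothesis combined with the 2-out-of-3 property.'' The projection $M_i(D)\times_{M_i(D')}D'(i)\to D'(i)$ is a pullback of $M_i(f)$, and weak equivalences are not stable under pullback; moreover, the inductive hypothesis that $f_j$ is a weak equivalence in lower degrees does not imply that $M_i(f)$ is one, since limits do not preserve weak equivalences. What is actually needed, and what carries the weight of the proof (Hirschhorn, Theorem 15.3.15), is the lemma that if the relative matching maps $m_jf$ are trivial fibrations for all $j$ occurring in the matching category of $i$, then $M_i(f)$ is itself a \emph{trivial fibration}; this is proved by a separate induction over a filtration of $\delta(i\downarrow\mathcal{I}^-)$ by degree, using stability of trivial fibrations under pullback and composition. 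With that lemma, $f_i$ is the composite of $m_if$ with a pullback of a trivial fibration, hence a weak equivalence, and the rest of your outline (the degreewise factorization and lifting arguments, and the dual latching-object statement for the cofibration half) goes through as you describe.
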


One notices that for $0 \to D$ the Reedy cofibrancy condition boils down to the cofibrancy of $L_i(D) \to D(i)$ for every $i$, and, dually, for $D \to 1$ the Reedy  fibrancy condition boils down to the fibrancy of $D(i) \to M_i(D)$ for every $i$. Thus a diagram is Reedy cofibrant if all its latching maps are cofibrations, and a diagram is Reedy fibrant if all its matching maps are fibrations.\\

In this note, our source category is $\Delta^{\mathrm{op}}$, and our target category is ${\mathsf{DGCat}}(k)$ with Dwyer-Kan model structure.

\section{Reedy fibrant replacement for simplicial DG-categories}
\subsection{Holstein construction}
Denote the DG-category obtained by  the $k$-linearization of the category for the totally ordered set $\{0,\ldots,n\}$ by $k[n]$. \\

For a DG-category $\A$, the DG-category $\AFun(k[n],\A)$ has $A_\infty$ functors $k[n] \to \A$ sending arrows to homotopy equivalences as objects and the complexes of $A_\infty$ natural transformations as morphisms. We spell out the formulas for our case. An object $(X,f) \in \AFun(k[n],\A)$ is the data of $(n+1)$ objects $X_0$, $\ldots$, $X_n$ in $\A$ and the morphisms $\{ f_I \}$ where $I$ runs over all subsets of $\{0, \ldots, n\}$ of cardinalities at least 2, with $f_{i_0, i_1, \ldots, i_k} \in \A^{1-k}(X_{i_0},X_{i_k})$, subject to the following conditions:

\begin{itemize}
    \item $d(f_{i_0,\ldots,i_k}) = \sum_{s=1}^{k-1} (-1)^{s} f_{i_0, \ldots, \widehat{i_s}, \ldots, i_k} - \sum_{s=1}^{k-1} (-1)^{s} f_{i_s, \ldots, i_k} \circ f_{i_0, \ldots, i_s}$ 
    \item all $f_{i,j}$ are homotopy equivalences.
\end{itemize}

Following Holstein, we use the following notation:

\begin{itemize}
    \item $d(\phi)_{i_0, \ldots, i_k} = d(\phi_{i_0, \ldots, i_k})$
    \item $(\Delta \phi)_{i_0,\ldots,i_k} = (-1)^{|\phi|}\sum_{s=1}^{k-1} (-1)^s \phi_{i_0, \ldots, \widehat{i_s}, \ldots, i_k}$
    \item $(\phi \circ \psi)_{i_0, \ldots, i_k} = \sum_{s=0}^{k} (-1)^{|\phi|s} \phi_{i_s, \ldots, i_k} \circ \psi_{i_0, \ldots, i_s}$, where one should read 0 if indexing subset is impossible.
\end{itemize}

In this notation, upon fixing $|f|=1$, the first of the conditions above becomes Maurer-Cartan equation: 

$$ d(f)+\Delta f + f \circ f = 0. $$

The $\operatorname{Hom}$ complexes in  $\AFun(k[\bullet],\A)$ are the complexes of $A_\infty$ natural transformations, namely

$$\AFun(k[\bullet],\A)((X,f),(Y,g)) = \bigoplus_{\{i_0,\ldots, i_k \} \subset \{0, \ldots, n\}}\A(X_{i_0},Y_{i_k})[-k] $$

with differential 

$$ d_{A_\infty}(a) = d(a)+ \Delta a + a \circ f - (-1)^{|a|}g \circ a.$$

Explicitly, a degree $m$ morphism $a\co (X,f) \to (Y,g)$ consists of components $\{ a_I \}$ where $I$ runs over all non-empty subsets of $\{0, \ldots, n\}$, with $a_{i_1, \ldots, i_k} \in \A^{1-k}(X_{i_1},Y_{i_k})$.  \\

As $k[\bullet]$ is a cosimplicial DG-category,  $\AFun(k[\bullet],\A)$ becomes a simplicial DG-category, with structure maps obtained by precompositions with structure maps of $k[\bullet]$. \\

One of the main results in the paper \cite{Hols} is the following theorem (see Propositions 3.9 and 3.10 in that paper). 

\begin{theo}
\label{main}
The simplicial DG-category $\AFun(k[\bullet],\A)$ as an object of $\mathsf{DGCat}(k)^{\Delta^{\operatorname{op}}}$ is  a Reedy fibrant replacement of $c\A$, the constant simplicial DG-category for a DG-category $\A$, with respect to Dwyer-Kan model structure on the target model category $ \mathsf{DGCat}(k)$.
\end{theo}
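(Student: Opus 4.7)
The plan is to split Theorem \ref{main} into its two parts: first, to exhibit a natural simplicial map $c\A \to \AFun(k[\bullet],\A)$ which is a levelwise quasiequivalence; second, to verify that $\AFun(k[\bullet],\A)$ is Reedy fibrant in the target Dwyer-Kan structure. The bulk of the technical work lies in the second part.

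For the first part, the comparison at level $n$ sends $a \in \A$ to the strict constant $A_\infty$ functor (with $X_i = a$, $f_{i,j} = \mathrm{id}_a$, and $f_I = 0$ for $|I| \geq 3$) and is split by evaluation at the vertex $0$. Thus it suffices to check that $\mathrm{ev}_0\co \AFun(k[n],\A) \to \A$ is a quasiequivalence. Essential surjectivity on $H^0$ is immediate from the splitting; in the other direction, every $(X,f)$ is $H^0$-isomorphic to the constant functor at $X_0$ via the homotopy equivalences $f_{0,i}$ extended to an $A_\infty$ natural transformation in the standard way. Full faithfulness on Hom complexes uses the filtration of $\AFun(k[n],\A)((X,f),(Y,g))$ by cardinality of the indexing subset: on the associated graded one reads off contracting homotopies on summands with $|I| \geq 2$ from the homotopy inverses of the $f_{i,j}$ and $g_{i,j}$, leaving only the $\{0\}$-component up to quasiisomorphism.

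For Reedy fibrancy, one first identifies the matching object. Since $(\Delta^{\mathrm{op}})^-$ consists of opposites of coface maps, the limit computing $M_n\AFun(k[\bullet],\A)$ is naturally the DG-category of ``$A_\infty$ functors on the boundary of the $n$-simplex'', with the same type of data $\{X_i, f_I\}$ and the same Hom complexes, but only for proper subsets $I$ of $\{0,\ldots,n\}$; the matching map is restriction. There are then two Dwyer-Kan fibration conditions to check. Surjectivity on Hom complexes is immediate: given a compatible boundary datum $\{a_I\}$, extend by setting $a_{0,\ldots,n} = 0$. Surjectivity of the matching map onto the relevant pullback at the level of objects requires extending a boundary Maurer-Cartan element to a top-level Maurer-Cartan element; the obstruction is a specific closed element in $\A(X_0, X_n)$, which one shows to be a boundary by an explicit computation exploiting the structure of the simplex.

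The main obstacle is the isofibration condition on $H^0$: given $(X,f) \in \AFun(k[n],\A)$ restricting to boundary data, and a homotopy equivalence $a$ in the boundary category from the restriction of $(X,f)$ to some $(Y^\partial, g^\partial)$, one must extend $(Y^\partial, g^\partial)$ to $(Y, g) \in \AFun(k[n],\A)$ and lift $a$ to a homotopy equivalence $(X, f) \to (Y, g)$. Here I would invoke the criterion of Efimov alluded to in the introduction: a morphism in $\AFun$ is a homotopy equivalence if and only if each degree-$0$ component $a_i$ is a homotopy equivalence in $\A$. This reduces the lifting problem to assembling the top components $g_{0,\ldots,n}$ and $\tilde{a}_{0,\ldots,n}$ from the pointwise homotopy inverses of the $a_i$ together with the given $f_{0,\ldots,n}$, in a way compatible with both the Maurer-Cartan equation for $(Y,g)$ and the cocycle condition for $\tilde{a}$. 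The anticipated difficulty is the sign-careful explicit construction of these top components; modulo this bookkeeping, the statement follows.
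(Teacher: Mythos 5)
Your overall decomposition (levelwise quasiequivalence plus Reedy fibrancy checked on matching maps) and your main tool (the pointwise criterion: an $A_\infty$ natural transformation is a homotopy equivalence iff all its degree-zero components are) coincide with the paper's strategy. However, two steps do not survive scrutiny. First, the condition you call ``surjectivity of the matching map onto the relevant pullback at the level of objects'' is not part of the definition of a Dwyer--Kan fibration, which asks only for surjectivity on Hom complexes and the isofibration property on $H^0$; and the claim you make there is in fact false. Already for $n=2$ an object of $M_2F(\A)$ is just a triple of closed homotopy equivalences $g_{01},g_{12},g_{02}$ with no compatibility imposed, and an extension $g_{012}$ with $d(g_{012})=\pm(g_{02}-g_{12}g_{01})$ exists only if $g_{02}$ is homotopic to $g_{12}g_{01}$; taking $g_{01}=g_{12}=\mathrm{id}$ and $g_{02}$ a closed degree-zero automorphism not homotopic to the identity (e.g.\ in the linearization of a nontrivial group with zero differential) shows the obstruction class is genuinely nonzero. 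So the matching map is typically not surjective on objects; this step must simply be deleted, and fortunately nothing else in the argument depends on it.

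Second, and more seriously, you defer as ``sign-careful bookkeeping'' exactly the step that constitutes the substance of the theorem (and the gap in Holstein's original argument that this paper fills): producing $g_{0,\ldots,n}$ with $d(g_{0,\ldots,n})=(\Delta g+g\circ g)_{0,\ldots,n}$ and a \emph{closed} lift $a_{0,\ldots,n}$ of the given homotopy equivalence $a$. By the counterexample above this cannot be extracted from the boundary data by formal manipulation alone; the construction must use a homotopy inverse of $a_0$ together with coherent homotopies. Concretely, the paper invokes Kontsevich's Lemma \ref{klemma} to choose $\overline{a_0}$, $r_{X_0}$, $r_{Y_0}$ and the secondary homotopy $r_{X_0Y_0}$, and sets $g_{0,\ldots,n}=(\Delta a+a\circ f-g\circ' a)\overline{a_0}-(\Delta g+g\circ g)r_{Y_0}$ and $a_{0,\ldots,n}=(\Delta a+a\circ f-g\circ' a)r_{X_0}+(\Delta g+g\circ g)r_{X_0Y_0}$ (Proposition \ref{lift}); verifying these identities is short but structurally nontrivial, and the need for the coherence $r_{X_0Y_0}$ is not a matter of signs. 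Until you produce such formulas (or an alternative argument, e.g.\ the cone-contraction route through $\mathsf{Pretr}(\A)$ of the appendix), the fibrancy half of your proof is incomplete. A smaller repair: in your full-faithfulness argument for $\mathrm{ev}_0$, the associated graded of the filtration by cardinality of $I$ carries only the internal differential of $\A$, so the summands with $|I|\geq 2$ are not individually contractible; the argument works if you instead pair each subset $I$ not containing $0$ with $I\cup\{0\}$ and use that composition with the homotopy equivalence $f_{0,\min I}$ is a quasi-isomorphism, or if you argue as the paper does, by a direct computation on Hom complexes between constant functors and then transport along the explicit equivalence $cX_0\to(X,f)$.
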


For convenience, we denote $\AFun(k[\bullet],\A)=:F_\bullet(\A).$ \\

The proof naturally consists of two parts. Firstly, one has to show that for every $n$, the natural (constant functor) inclusion $\A \to F_n(\A)$ is a quasiequivalence.  Secondly, one has to show that $F_\bullet(\A)$ is Reedy fibrant. \\

\subsection{Quasiequivalences}
In both parts of the proof, we rely on the following general fact from the homotopy theory of $A_\infty$-functors, due to Lef{\`e}vre-Hasegawa, Proposition 8.2.2.3 in \cite{LH}. We reduce the generality by considering DG-categories instead of $A_\infty$-categories.

\begin{lem}
\label{technical}
Let $\mathcal{A}$, $\mathcal{B}$ be two DG-categories, $F$, $G$ two $A_\infty$-functors $\mathcal{A} \to \mathcal{B}$ and $a\co F \to G$ a closed $A_\infty$ natural transformation of degree 0. Then $a$ is a homotopy equivalence in $A_\infty \operatorname{Fun}(\mathcal{A},\mathcal{B})$ if and only if for every $X \in \mathcal{A}$ the component $a_X\co F(X) \to G(X)$ is a homotopy equivalence in $\mathcal{B}$. 
\end{lem}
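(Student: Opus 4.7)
For $(\Rightarrow)$: if $a$ is a homotopy equivalence in $A_\infty\operatorname{Fun}(\A,\B)$, then since evaluation at each $X \in \A$ on the arity-zero component extends to a DG-functor $\operatorname{ev}_X\co A_\infty\operatorname{Fun}(\A,\B) \to \B$, and DG-functors preserve homotopy equivalences, each $a_X$ is a homotopy equivalence in $\B$.

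For the nontrivial direction, the plan is to construct a homotopy inverse $b\co G \to F$ together with degree $-1$ natural transformations $H\co G \to G$ and $H'\co F \to F$ satisfying $d_{A_\infty}(H)=\operatorname{id}_G - a\cdot b$ and $d_{A_\infty}(H')=\operatorname{id}_F - b\cdot a$, all by induction on the arity of the components. At arity zero, pick for each object $X \in \A$ a homotopy inverse $b_X\co G(X)\to F(X)$ of $a_X$, together with primitives $H_X\in\B^{-1}(G(X),G(X))$ and $H'_X\in\B^{-1}(F(X),F(X))$ of $\operatorname{id}-a_Xb_X$ and $\operatorname{id}-b_Xa_X$ respectively; this base data exists precisely because each $a_X$ is a homotopy equivalence in $\B$.

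In the inductive step, the defining equations for $b$, $H$ and $H'$, when read off in arity $n$, take the form $d(b_n)=R_n$, $d(H_n)=S_n$, $d(H'_n)=S'_n$, with right-hand sides built explicitly from the already-constructed lower-arity data. The identities in arity $<n$ force these right-hand sides to be $d$-closed in the appropriate Hom-complex of $\B$, so the question is whether they are $d$-boundaries. To exhibit explicit primitives, one uses the base-step data as ``contracting'' morphisms: rewriting the obstruction modulo boundaries via $\operatorname{id}-a_Xb_X = d(H_X)$ and $\operatorname{id}-b_Xa_X = d(H'_X)$ on the relevant end produces the required $b_n$, $H_n$, $H'_n$ at each stage.

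The main obstacle is the sign bookkeeping in the $A_\infty$ Maurer-Cartan-type equations and verifying that each obstruction is genuinely trivialised by the base-step data rather than merely being a cycle; the conceptual content, however, is clean, since the $A_\infty$-functor Hom-complex is built from ordinary Hom-complexes of $\B$ by successively coupling higher-arity data. This is precisely the line of argument in Lef\`evre-Hasegawa's Proposition 8.2.2.3 of \cite{LH}, specialised from the $A_\infty$-category setting to DG-categories.
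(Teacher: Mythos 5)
First, a point of comparison: the paper does not actually prove Lemma \ref{technical}; it imports it as a black box from Lef\`evre--Hasegawa (\cite{LH}, Proposition 8.2.2.3), so your argument cannot be checked against an in-paper proof and has to stand on its own. The easy direction of your proposal is fine: evaluation at $X$ is indeed a DG-functor $A_\infty\operatorname{Fun}(\A,\B)\to\B$, because at arity zero the differential and the composition of $A_\infty$ natural transformations reduce to $d(a_X)$ and $a_X\circ b_X$, and DG-functors preserve homotopy equivalences.

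For the converse, however, what you have written is a plan rather than a proof, and the step you defer is the entire content of the lemma. You correctly note that the arity-$n$ right-hand sides are cycles and that the issue is whether they are boundaries, but the claim that inserting $\operatorname{id}-a_Xb_X=d(H_X)$ and $\operatorname{id}-b_Xa_X=d(H'_X)$ ``on the relevant end produces the required $b_n$, $H_n$, $H'_n$'' is an assertion, not an argument, and it hides three concrete difficulties. (i) After such an insertion one still has to use the closedness relations for $a$ (and the $A_\infty$-functor equations for $F$, $G$) at the same arity to recognize the leftover terms as a boundary: already at arity $1$, exactness of $b_{x_1}G_1-F_1b_{x_0}$ requires rewriting it as $\pm\, b_{x_1}\bigl(G_1a_{x_0}-a_{x_1}F_1\bigr)b_{x_0}$ modulo boundaries and then invoking $d_{A_\infty}(a)=0$; nothing in your sketch verifies that this pattern persists at higher arity, and that verification is the lemma. (ii) The system is coupled: $(\operatorname{id}_G-a\cdot b)_n$ contains the term $a_{x_n}\circ b_n$, so $H_n$ cannot be solved for independently of the choice of $b_n$, and the closedness of its obstruction depends on that choice; moreover, demanding one $b$ with \emph{both} homotopies in a single induction is more than is needed and not obviously achievable -- the standard remedy is to construct a right homotopy inverse only, apply the same construction to $b$ (whose components are again pointwise homotopy equivalences), and deduce two-sided invertibility formally in $H^0$. (iii) With arbitrary, uncoordinated choices of $(b_X,H_X,H'_X)$ the bookkeeping need not close up; some coherence between the homotopies is generally required -- compare the paper's own reliance on Kontsevich's Lemma \ref{klemma}, where the extra datum $r_{X_0Y_0}$ with $a_0r_{X_0}-r_{Y_0}a_0=d(r_{X_0Y_0})$ is exactly what makes the explicit lifts of Proposition \ref{lift} work. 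If you want a route that avoids the term-by-term obstruction analysis entirely, filter the Hom-complexes of $A_\infty\operatorname{Fun}(\A,\B)$ by arity: the filtration is complete, composition with $a$ preserves it and acts on the associated graded by post-composition with the $a_X$, hence by homotopy equivalences on each graded piece; a limit argument then shows that $A_\infty\operatorname{Fun}(\A,\B)(T,F)\to A_\infty\operatorname{Fun}(\A,\B)(T,G)$ is a quasi-isomorphism for every $T$, and Yoneda applied in the graded homotopy category makes $a$ invertible in $H^0$.
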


Note if the DG-category $A_\infty \operatorname{Fun}(\mathcal{A},\mathcal{B})$ is replaced by the ``naive version of inner $\operatorname{Hom}$"  $ \operatorname{DGFun}(\mathcal{A},\mathcal{B})$, then the statement of the lemma above would not hold. \\ 

We can now prove the following theorem.

\begin{theo}
For every $n$, the constant functor inclusion $c\co \A \to F_n(\A)$ is a quasiequivalence.
\end{theo}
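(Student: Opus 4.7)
The plan is to verify separately the two parts of the Dwyer--Kan quasiequivalence condition for $c$: quasi-isomorphism on every Hom complex, and essential surjectivity at the level of $H^0$. On objects, $c$ sends $X \in \mathcal{A}$ to the constant $A_\infty$-functor with $X_i = X$, pairwise structure map $\operatorname{id}_X$, and zero higher components (this trivially satisfies Maurer--Cartan); on morphisms a closed degree-zero $\alpha \co X \to Y$ maps to the natural transformation with $a_i = \alpha$ on singletons and $0$ on larger subsets.

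For the Hom complex statement, I will specialise the general $d_{A_\infty}$ formula to the case when both source and target are constant. The two composition contributions $a \circ (\text{source})$ and $(\text{target}) \circ a$ collapse to the single extreme-face terms $(-1)^{|a|} a_{I \setminus \{i_0\}}$ and $(-1)^{k-1} a_{I \setminus \{i_k\}}$ respectively, since among the structure maps only the pairwise identities survive; together with the $\Delta$-term they combine into
\begin{equation*}
(d_{A_\infty} a)_{i_0, \ldots, i_k} = d_{\mathcal{A}}(a_{i_0, \ldots, i_k}) + (-1)^{|a|}\sum_{s=0}^{k} (-1)^s a_{i_0, \ldots, \widehat{i_s}, \ldots, i_k}.
\end{equation*}
Therefore $F_n(\mathcal{A})(c(X), c(Y))$ is the total complex of the bicomplex whose vertical differential is $d_\mathcal{A}$ and whose horizontal differential is the simplicial cochain differential of the standard $n$-simplex with coefficients in $\mathcal{A}(X, Y)$. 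Since $\Delta^n$ is contractible, the horizontal cohomology is a single copy of $\mathcal{A}(X, Y)$ generated by the constant cochain, and a standard filtration argument identifies the inclusion of this subcomplex---which is exactly $c$ on Hom spaces---as a quasi-isomorphism onto the total complex.

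For essential surjectivity, given $(X, f) \in F_n(\mathcal{A})$ I will produce an explicit homotopy equivalence $a \co c(X_0) \to (X, f)$, a ``transgression from the vertex $0$'' defined by
\begin{equation*}
a_I = \begin{cases} f_{\{0\} \cup I} & \text{if } 0 \notin I, \\ 0 & \text{if } 0 \in I. \end{cases}
\end{equation*}
Degrees match because $|\{0\} \cup I| = |I| + 1$ when $0 \notin I$, so that $f_{\{0\} \cup I}$ has degree $1 - |I|$. Closedness of $a$ splits by whether $I$ contains $0$. For $0 \notin I$, direct expansion of the four summands of $(d_{A_\infty} a)_I$ reassembles term-by-term into the Maurer--Cartan equation for $f$ applied to the subset $\{0\} \cup I$, and therefore vanishes. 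For $0 \in I$, all contributions vanish except one from $a \circ (\text{source})$, coming from the identity on the initial pair, and one from $(\text{target}) \circ a$, coming from $a_0 = \operatorname{id}_{X_0}$; these two coincide. Each singleton component of $a$ is either $\operatorname{id}_{X_0}$ or $f_{0, i}$, the latter a homotopy equivalence by the very definition of $F_n(\mathcal{A})$, so Lemma~\ref{technical} upgrades $a$ to a homotopy equivalence in $F_n(\mathcal{A})$, giving $(X, f) \simeq c(X_0)$ in $H^0(F_n(\mathcal{A}))$.

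The main obstacle I expect is the closedness verification in the essential surjectivity step: the $\Delta$-term, the two composition terms, and the quadratic $f \circ f$ piece of Maurer--Cartan for $f$ have to line up with matching signs at the enlarged subset $\{0\} \cup I$. Once this bookkeeping is done the remainder is structural---Lemma~\ref{technical} does all the work of passing from componentwise to global homotopy equivalence, and the Hom-complex calculation reduces cleanly to the contractibility of the simplex.
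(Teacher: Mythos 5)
Your essential-surjectivity step is, up to wording, the paper's own argument: the same transgression-from-vertex-$0$ morphism $cX_0 \to (X,f)$, closedness deduced from the Maurer--Cartan equation for $f$, and Lemma \ref{technical} to promote pointwise homotopy equivalences to a homotopy equivalence in $F_n(\A)$. Where you genuinely differ is the Hom-complex half: the paper checks injectivity and surjectivity on cohomology by hand, exhibiting an explicit primitive $b$ with $b_{i_0\ldots i_k}=a_{0i_0\ldots i_k}$ for $i_0\neq 0$ (which is precisely the ``cone off the vertex $0$'' contracting homotopy), whereas you identify $F_n(\A)(cX,cY)$ with $\A(X,Y)\otimes C^\bullet(\Delta^n;k)$ (the indexing set is finite, so product and sum agree, and the filtration by cardinality of $I$ is bounded) and conclude from contractibility of the simplex by a filtration/spectral-sequence comparison. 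Both are valid; your version is more structural and explains why the statement holds, while the paper's is more self-contained and makes explicit exactly the sign bookkeeping you defer. One slip to correct: your displayed case split assigns $a_{\{0\}}=0$ because $0\in\{0\}$, yet the rest of your argument (the cancellation in the $0\in I$ closedness check and the final application of Lemma \ref{technical}) requires $a_0=\operatorname{id}_{X_0}$; as literally written the formula already fails closedness at pairs $\{0,j\}$, where one needs $f_{0j}\circ a_0=f_{0j}$, and it would also destroy the pointwise homotopy-equivalence hypothesis at the vertex $0$. So single out the singleton $\{0\}$ and set $a_0=\operatorname{id}_{X_0}$, exactly as in the paper.
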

\begin{proof}
We first check that $c$ induces quasiisomorphism on all $\operatorname{Hom}$ complexes. It is injective on cohomology -- if for $f\co X \to Y$ we have $cf = d_{A_\infty}(g)$, then in particular $f = (cf)_0 = d(g_0)$. To show that $c$ is surjective on cohomology, let $a$ be a closed map $cX \to cY$ for $X,Y \in \A$. Let us check that $a$ is in the same cohomology class as $c(a_0)$, i.e. that $a-c(a_0)$ is exact. The fact that $d_{A_\infty}(a) = 0$ corresponds to the following formulas: 

$$ \begin{cases} d(a_i) = 0 \\ d(a_{i_0 \ldots i_k}) = a_{i_1 \ldots i_k} - a_{i_0 \ldots i_{k-1}} + \sum_{s=1}^{k-1}(-1)^s a_{i_0 \ldots \widehat{i_s} \ldots i_k} \end{cases}$$

Then $a-c(a_0) = d(b)$, where 

$$b_{i_0 \ldots i_k} = \begin{cases} 0 & i_0 = 0 \\
a_{0 i_0 \ldots i_k} & i_0 \neq 0 \end{cases}$$

We then  check that $c$ is essentially fully faithful at the level of $H^0$, namely that any object $(X,f) \in F_n(\A)$ is homotopy equivalent to an object in the image of $c$. Indeed, consider the object $cX_0$. The $A_\infty$-natural transformation $a\co cX_0 \to (X,f)$ is given by 

$$a_{i} = \begin{cases} 1_{X_0} & i = 0 \\ f_{0i} & \mathrm{otherwise}\end{cases} $$ 
$$a_{i_0 \ldots i_k} = \begin{cases} 0 & i_0 = 0 \\ f_{0 i_0 \ldots i_k} & \mathrm{otherwise}\end{cases}$$

The fact that $d_{A_\infty}(a) = 0$ follows from Maurer-Cartan condition for $f$.\\

Note that $1_{X_0}$ and $f_{0i}$ are all homotopy equivalences in $\A$. Then, by Lemma $\ref{technical}$, $a$ is a homotopy equivalence.
\end{proof}

\begin{rem}
In \cite{Hols}, it was fist shown that every $(X,f) \in F_n(\A)$ can be strictified, i.e. it is homotopy equivalent to an $(\tilde{X},\tilde{f})$ where all compositions are strict and $\tilde{f}_{i_0 \ldots i_k} = 0$ for $k>1$. However, Lemma \ref{technical} does not become elementary even in this generality, and once we have this lemma, strictification becomes unnecessary. 
\end{rem}

\subsection{Reedy fibrancy}
We now prove Reedy fibrancy of $F_\bullet (\A)$ by showing that the matching maps are Dwyer-Kan fibrations -- namely, that they are surjective on all the $\operatorname{Hom}$ complexes and that they are isofibrations at the level of $H^0$. We begin from explicitly describing these matching maps. \\

By definition of a matching object, we have 

$$M_nF(\A) = \lim_{\delta([n] \downarrow (\Delta^{\mathrm{op}})_{-})}F_\bullet(\A) = \lim_{[m]\hookrightarrow [n]}F_m(\A).$$

This is the data of $A_\infty$ functors without highest homotopies. Namely, an object $(X,f) \in M_nF(\A)$ is the data of $(n+1)$ objects $X_0$, $\ldots$, $X_n$ in $\A$ and the morphisms $\{ f_I \}$ where $I$ runs over all subsets of $\{0, \ldots, n\}$ of cardinalities from $2$ to $n$ (that is, the subset $\{0, \ldots, n\}$ is not included) with $f_{i_0, i_1, \ldots, i_k} \in \A^{1-k}(X_{i_0},X_{i_k})$, satisfying the following conditions:\\

\begin{itemize}
    \item $d(f) + \Delta f + f \circ f = 0$;
    \item all $f_{i,j}$ are homotopy equivalences.\\
\end{itemize}

Similarly, the morphisms are given by complexes of $A_\infty$ natural transformations without highest homotopies. Namely, a degree $m$ morphism $a\co (X,f) \to (Y,g)$ is the set of morphisms $\{ a_I \}$ where $I$ runs over all non-empty subsets of $\{0, \ldots, n\}$ except for $\{0, \ldots, n \}$ itself, with $a_{i_1, \ldots, i_k} \in \A^{1-k}(X_{i_1},Y_{i_k})$, and with differential given by 
$$d_{A_\infty}(a) = d(a)+ \Delta a + a \circ f - (-1)^{|a|}g \circ a.$$ 

The matching map $m_n\co F_n(\A) \to M_nF(\A)$ is the natural forgetful functor that, on objects, forgets $f_{0,1,\ldots,n}$, and, on morphisms, forgets $a_{0, 1, \ldots, n}$. We write $(X,f) \mapsto (X,f_{\leq n})$.  \\

The first part of Reedy fibrancy for $F_\bullet(\A)$ is the following elementary proposition.

\begin{prop}
The forgetful functor $m_n$ is surjective on Hom complexes.
\end{prop}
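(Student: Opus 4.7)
The plan is immediate from the explicit description of the matching map. Fix $(X,f),(Y,g)\in F_n(\A)$; by the formulas for $A_\infty$ natural transformations spelled out above, a degree $l$ element of the source Hom complex $F_n(\A)((X,f),(Y,g))$ is a tuple $\{a_I\}$ indexed by all non-empty subsets $I\subseteq\{0,\ldots,n\}$, while a degree $l$ element of the target Hom complex $M_nF(\A)((X,f_{\leq n}),(Y,g_{\leq n}))$ is such a tuple with $I$ restricted to proper non-empty subsets. The induced map on Hom complexes
\[
m_n\co F_n(\A)((X,f),(Y,g))\to M_nF(\A)((X,f_{\leq n}),(Y,g_{\leq n}))
\]
is therefore nothing but the projection forgetting the single top component $a_{0,1,\ldots,n}\in\A^{l-n}(X_0,Y_n)$.

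Given a degree $l$ morphism $\bar a$ in the target, I would lift it by setting $a_I:=\bar a_I$ for every proper non-empty $I$ and $a_{0,\ldots,n}:=0$; then $m_n(a)=\bar a$ by construction. Surjectivity on a Hom complex is a degreewise statement about the underlying graded vector spaces, and imposes no compatibility with $d_{A_\infty}$ or with the Maurer-Cartan data $f$ and $g$, so no obstacle arises. The genuinely nontrivial half of Dwyer-Kan fibrancy for $m_n$ is the isofibration property at the level of $H^0$, which will require explicitly lifting homotopy equivalences from the matching object together with a choice of top-level component realising the Maurer-Cartan equation, and is the subject of the work to follow.
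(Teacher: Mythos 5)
Your proof is correct and is essentially the paper's own argument: the map on Hom complexes is the projection forgetting the top component $a_{0,\ldots,n}$, on which no condition is imposed, so extending by zero gives a preimage. Nothing further is needed.
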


\begin{proof} 
A preimage of a truncated $A_\infty$ transformation $a$ between $(X,f_{\leq n})$ and $(Y,g_{\leq n})$ can be obtained by simply assigning any value (e.g. 0) to $a_{0,1, \ldots, n}$, as there are no conditions on the components. 
\end{proof}

Showing that $m_n$ is a homotopy isofibration requires more work. In our computations, we use the following lemma,  from \cite{Kon}, Section 5, Theorem 1 (see also \cite{Shoi}, Lemma 3.6).

\begin{lem}
\label{klemma}
For any DG-category $\A$ and a homotopy equivalence $f \in \A^0(X,Y)$  it is always possible to find $f \in \A^0(Y,X)$, $r_X \in \A^{-1}(X,X)$, $r_Y \in \A^{-1}(Y,Y)$ and $r_{XY} \in \A^{-2}(X,Y)$ such that: 
\begin{itemize}
    \item $gf = 1_X + d(r_X)$
    \item $fg = 1_Y + d(r_Y)$
    \item $fr_X - r_Yf = d(r_{XY})$
\end{itemize}
\end{lem}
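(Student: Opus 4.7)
The plan is to start from any initial triple $(g, r_X, r_Y)$ that witnesses $f$ being a homotopy equivalence, and then to correct only $r_X$ by a single one-step modification that forces the third identity without disturbing the first two. First, by the definition of homotopy equivalence we can pick a closed $g \in \A^0(Y,X)$ together with $r_X \in \A^{-1}(X,X)$ and $r_Y \in \A^{-1}(Y,Y)$ satisfying just the first two relations $gf = 1_X + d(r_X)$ and $fg = 1_Y + d(r_Y)$. I would then introduce the coherence defect
\[
\beta := f r_X - r_Y f \in \A^{-1}(X,Y)
\]
and check, using the graded Leibniz rule with $d(f) = d(g) = 0$, that
\[
d(\beta) = f\,d(r_X) - d(r_Y)\,f = f(gf - 1_X) - (fg - 1_Y)\,f = 0,
\]
so $\beta$ is closed of degree $-1$.

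The key move will be to replace $r_X$ by $r_X' := r_X - g \beta$. Since $g$ and $\beta$ are both closed, $d(g\beta) = 0$, hence $d(r_X') = d(r_X) = gf - 1_X$: the first relation is preserved, and the second is untouched. Substituting $r_X'$ into the defect and using $fg = 1_Y + d(r_Y)$, one computes
\[
f r_X' - r_Y f = \beta - fg\,\beta = (1_Y - fg)\,\beta = -\,d(r_Y)\,\beta.
\]
Because $\beta$ is closed, the graded Leibniz rule gives $d(r_Y \beta) = d(r_Y)\,\beta$, so the last displayed expression equals $d(-\,r_Y \beta)$. Setting
\[
r_{XY} := -\,r_Y \beta = r_Y^{2} f - r_Y f r_X \in \A^{-2}(X,Y)
\]
then yields $f r_X' - r_Y f = d(r_{XY})$, which is the third identity with the updated $r_X'$ now playing the role of $r_X$.

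There is no genuine obstacle here; the one thing that is not immediately obvious is that a correction to $r_X$ alone, rather than to $g$, $r_X$ and $r_Y$ simultaneously, already suffices, because the shift of $\beta$ by $-fg\,\beta$ turns out to be automatically a boundary once $\beta$ is known to be closed. Conceptually, this is just the second step of building a fully coherent $A_\infty$-equivalence from $f$; iterating the same correction scheme degree by degree would produce the higher homotopies that appear later in Holstein's construction.
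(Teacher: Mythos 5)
Your proof is correct and complete. There is nothing in the paper to compare it against: the authors do not prove this lemma but quote it from Kontsevich's ENS lectures (Section 5, Theorem 1) and Shoikhet's Lemma 3.6, so your argument supplies a proof the paper omits, and it is essentially the standard one found in those sources. The structure is sound at every step: starting from any closed quasi-inverse $g$ with homotopies $r_X$, $r_Y$, the defect $\beta = fr_X - r_Yf$ is closed because $d(\beta) = f\,d(r_X) - d(r_Y)\,f = f(gf-1_X) - (fg-1_Y)f = 0$; replacing $r_X$ by $r_X' = r_X - g\beta$ leaves the first two identities intact since $d(g\beta)=0$; and the new defect $\beta - fg\beta = -d(r_Y)\beta$ is exactly $d(-r_Y\beta)$ because $\beta$ is closed, with $r_{XY} = -r_Y\beta = r_Y^2 f - r_Y f r_X$ of the required degree $-2$. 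One cosmetic remark: the lemma as printed has a typo (the quasi-inverse is written as ``$f \in \A^0(Y,X)$'' instead of $g$), which you correctly interpreted; and your closing observation that this is the first step of an inductive construction of a fully coherent equivalence is consistent with how the cited references use the statement.
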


Now suppose that we have an object $(Y,g) \in M_nF(\A)$ and a homotopy equivalence $a\co (X,f_{\leq n}) \to (Y,g)$ (with homotopy inverse $\overline{a}$). To show that $m_n$ is an isofibration on $H^0$, we need to lift $a$ to a homotopy equivalence in $F_n(\A)$. \\

\begin{rem}
In \cite{Hols}, the lift of the object is constructed -- namely, $g_{0,\ldots,n}$ is given with $d(g_{0,\ldots,n}) = (\Delta g +g\circ g)_{0,\ldots,n}$. We insignificantly modify the lift and provide the computation for the sake of reader's convenience. In what follows, let $\alpha \circ ' \beta$ denote $\alpha \circ \beta$ without the term $\alpha_{0, \ldots, n} \circ \beta_{0}$. Let $r_{Y_0}$ be such that $a_0\overline{a_0} = 1_{Y_0}+d(r_{Y_0})$. The indexing subset is always $\{0,1, \ldots, n \}$ and is omitted.
\end{rem}
\begin{prop}
Setting  $$g_{0,\ldots, n}\co = (\Delta a + a \circ f - g \circ'a )\overline{a_0} - (\Delta g + g\circ g) r_{Y_0}$$ indeed gives $d(g_{0,\ldots, n}) =  \Delta g + g\circ g$, thus this lifts the object.
\end{prop}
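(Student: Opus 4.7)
The plan is to verify the identity by a direct Leibniz computation from the defining formula for $g_{0,\ldots,n}$, reducing it via two auxiliary Bianchi-type identities and the homotopy data of Lemma~\ref{klemma}. Abbreviate $A = (\Delta a + a \circ f - g \circ' a)_{0,\ldots,n}$ and $G = (\Delta g + g\circ g)_{0,\ldots,n}$, so that the defining formula reads $g_{0,\ldots,n} = A\overline{a_0} - G\, r_{Y_0}$.

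First I would differentiate this expression via the Leibniz rule. The morphism $\overline{a_0}$ is a closed degree zero morphism in $\A$, so $d(\overline{a_0}) = 0$; the morphism $r_{Y_0}$ comes from Lemma~\ref{klemma} and satisfies $d(r_{Y_0}) = a_0 \overline{a_0} - 1_{Y_0}$. This reduces the computation to evaluating $d(A)$ and $d(G)$ separately and substituting.

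Second I would prove the auxiliary identity $d(G) = 0$. Expanding $d((\Delta g + g \circ g)_{0,\ldots,n})$ via Leibniz produces only $d$ applied to components $g_I$ for proper subsets $I \subsetneq \{0,\ldots,n\}$; for these, the matching object data provides $d(g_I) = -(\Delta g + g \circ g)_I$. Substituting and using the formal identities of the bar complex---namely $\Delta^2 = 0$, the derivation property of $\Delta$ on $\circ$, and the graded associativity of $\circ$---all resulting terms pair up and cancel.

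Third, and most importantly, I would establish the analogous Bianchi-type identity $d(A) = \pm G \cdot a_0$. Expand $d(A)$ by Leibniz, then substitute the full Maurer-Cartan equation for $f$ (available at all levels since $(X,f)$ is a genuine object of $F_n(\A)$) and the truncated Maurer-Cartan equations for $g$ and for $a$ at proper subsets of $\{0,\ldots,n\}$ (provided by $(Y,g)\in M_nF(\A)$ and by closedness of $a$ in $M_nF(\A)$). The same formal identities of the bar complex as above force all but one cancellation: the single surviving term is the one that ``should have been compensated'' by $d(a_{0,\ldots,n})$ applied against $g_{0,\ldots,n}\circ a_0$; since this top component of $a$ does not exist, the residue materialises as $\pm G\cdot a_0$. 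In the base case $n=2$ this is a one-line check, and the general argument proceeds analogously, driven by the same diagrammatic cancellations.

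Substituting $d(G) = 0$, $d(A) = \pm G\cdot a_0$, and $d(r_{Y_0}) = a_0 \overline{a_0} - 1_{Y_0}$ into the Leibniz expansion yields
$$ d(g_{0,\ldots,n}) = \pm G\, a_0 \overline{a_0} \mp G(a_0 \overline{a_0} - 1_{Y_0}), $$
in which the $G\, a_0 \overline{a_0}$ terms cancel, leaving the residual $\pm G = \pm (\Delta g + g\circ g)_{0,\ldots,n}$ as desired. The main obstacle is the Bianchi identity for $d(A)$: it is the combinatorial translation of $d_{A_\infty}^2 = 0$ applied formally at the top level, with the asymmetry between $f$ (present at the top level) and $a$, $g$ (truncated) producing precisely the one non-cancelling term. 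Careful sign bookkeeping is needed so that this lone term is correctly identified, and the $-G\, r_{Y_0}$ correction term in the definition of $g_{0,\ldots,n}$ is introduced specifically so that the boundary $d(r_{Y_0}) = a_0 \overline{a_0} - 1_{Y_0}$ cancels it against the contribution of $d(A)\overline{a_0}$, isolating the single copy of $G$ on the right-hand side.
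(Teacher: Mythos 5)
Your proposal follows essentially the same route as the paper's proof: both reduce the claim to the two identities $d(\Delta g + g\circ g)=0$ and $d(\Delta a + a\circ f - g\circ' a) = (\Delta g + g\circ g)\,a_0$, then combine them with $d(r_{Y_0}) = a_0\overline{a_0}-1$ via the Leibniz rule. The paper likewise leaves the key Bianchi-type identity for $d(\Delta a + a\circ f - g\circ' a)$ as ``an explicit computation,'' so your level of detail matches the original argument.
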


\begin{proof}
One first checks that $d(\Delta g + g\circ g)=0$. 
Then $$d((\Delta g + g\circ g) r_{Y_0}) = (\Delta g + g\circ g)d(r_{Y_0}) = (\Delta g + g\circ g) (a_0\overline{a_0}-1).$$
So we are left to see that $$d(\Delta a + a \circ f - g \circ'a)\overline{a_0}') = (\Delta g + g\circ g)a_0\overline{a_0}$$ -- or that $$d(\Delta a + a\circ f - g \circ'a) = (\Delta g + g\circ g)a_0,$$ which is an explicit computation.
\end{proof}

We now construct the {\it closed} lift of the morphism $a$ -- namely, we give a formula for $a_{0,\ldots,_n}$ with $d(a_{0,\ldots,_n}) = (\Delta a + a \circ f - g \circ a)_{0,\ldots,n}$. Let $r_{X_0}$ be such that $\overline{a_0}a_0 = 1_{X_0} + d(r_{X_0})$, let $r_{Y_0}$ be such that $a_0\overline{a_0} = 1_{Y_0} + d(r_{Y_0})$, and let $r_{X_0 Y_0}$ be such that $a_0r_{X_0}-r_{Y_0}a_0 = d(r_{X_0 Y_0})$ (such $r_{X_0}$, $r_{Y_0}$ and $r_{X_0 Y_0}$ can always be found due to Lemma \ref{klemma}). The indexing subset is again $\{0,1, \ldots, n \}$ and is omitted.

\begin{prop}
\label{lift}
Setting  $$a_{0,\ldots, n}\co = (\Delta a + a \circ f - g \circ ' a) r_{X_0} + (\Delta g+ g \circ g) r_{X_0Y_0}$$ indeed gives $d(a_{0,\ldots, n}) = \Delta a + a \circ f + g \circ a$, thus this lifts the morphism.
\end{prop}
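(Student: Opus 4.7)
The plan is to follow the same strategy as the object-lift of the preceding proposition, now using the full strength of Lemma~\ref{klemma}. Applying the Leibniz rule to the proposed formula for $a_{0,\ldots,n}$ produces four summands, coming from the two ``inner'' differentials $d(\Delta a + a\circ f - g\circ' a)$ and $d(\Delta g + g\circ g)$ and the two ``outer'' differentials $d(r_{X_0})$ and $d(r_{X_0 Y_0})$.

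The two inner differentials are handled first. The identity $d(\Delta g + g\circ g)_{0,\ldots,n} = 0$ has already been verified in the proof that $g_{0,\ldots,n}$ lifts the object. The key new computation is
\[
d(\Delta a + a\circ f - g\circ' a)_{0,\ldots,n} \;=\; (\Delta g + g\circ g)_{0,\ldots,n}\cdot a_0,
\]
obtained by expanding $d$, applying the Maurer--Cartan equations for $f$ and for the lower-order part of $g$, and invoking the closedness $d_{A_\infty}(a)_I = 0$ for every proper subset $I \subsetneq \{0,\ldots,n\}$. This is the top-component analogue of the identity used for the object lift, with the role of the Maurer--Cartan equation now played by the subtop closedness of $a$.

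For the outer differentials, Lemma~\ref{klemma} supplies $d(r_{X_0}) = \overline{a_0}\,a_0 - 1_{X_0}$ and $d(r_{X_0 Y_0}) = a_0 r_{X_0} - r_{Y_0}\,a_0$. Assembling the four summands, the two occurrences of $(\Delta g + g\circ g)\,a_0\,r_{X_0}$ (one from the inner differential above, one from $d(r_{X_0 Y_0})$) cancel against each other. The remaining term $(\Delta g + g\circ g)\,r_{Y_0}\,a_0$ is rewritten by solving the defining formula for $g_{0,\ldots,n}$, namely $(\Delta g + g\circ g)\,r_{Y_0} = (\Delta a + a\circ f - g\circ' a)\,\overline{a_0} - g_{0,\ldots,n}$; substituting this cancels the piece $(\Delta a + a\circ f - g\circ' a)\,\overline{a_0}\,a_0$ coming from $d(r_{X_0})$ and contributes a new $g_{0,\ldots,n}\,a_0$. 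Since by construction $(g\circ a)_{0,\ldots,n} = (g\circ' a)_{0,\ldots,n} + g_{0,\ldots,n}\,a_0$, what is left is exactly the asserted right-hand side. The main obstacle will be the explicit verification of the displayed identity and the attendant sign bookkeeping throughout -- the Koszul signs in the Leibniz rule, the alternating signs in $\Delta$, and the $(-1)^{|\phi|s}$ signs in $\circ$ all need to be tracked consistently -- but once this key identity is in hand the rest of the assembly is mechanical.
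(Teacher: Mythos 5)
Your proposal is correct and follows essentially the same route as the paper: Leibniz on the two summands, the identity $d(\Delta a + a \circ f - g \circ' a) = (\Delta g + g\circ g)a_0$ (which the paper quotes from the object-lift computation rather than re-deriving), the relations of Lemma \ref{klemma}, and the substitution $g \circ a = g \circ' a + g_{0,\ldots,n}a_0$ with the explicit value of $g_{0,\ldots,n}$. The only difference is organizational (you massage $d(a_{0,\ldots,n})$ toward the target, while the paper first rewrites the target), and your caveat about sign bookkeeping matches the paper's own level of precision.
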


\begin{proof}
We start from observing that $g \circ a = g \circ ' a +  g_{0,\ldots,_n}a_0$ and we can insert our value of $g_{0,\ldots,n}$. This gives 
$$ \Delta a + a \circ f - g \circ a = \Delta a + a \circ f - g \circ' a -  (\Delta a + a \circ f- g \circ'a)\overline{a_0}a_0 + (\Delta g + g\circ g) r_{Y_0}a_0. $$

We know that $d(\Delta a + a \circ f - g \circ'a) = (\Delta g + g\circ g)a_0$, so
$$d((\Delta a + a \circ f - g \circ ' a) r_{X_0})=(\Delta g + g\circ g)a_0r_{X_0} + (\Delta a + a \circ f - g \circ ' a)(\overline{a_0}a_0 - 1).$$
Then we are left to notice that indeed
$$(\Delta g + g\circ g)(a_0r_{X_0}-r_{Y_0}a_0) = d((\Delta g+ g \circ g) r_{X_0Y_0})$$

and thus we have constructed the lift.
\end{proof}

Having Lemma \ref{technical} in our possession, we are left to notice that the degree $0$ components of the lift  are $a_i$, which are homotopy equivalences in $\A$ as $a$ was a homotopy equivalence in $M_nF(\A)$. Thus, we have proved the following theorem. 

\begin{theo}
\label{reedyfib}
For all $n$, matching maps $F_n(\A) \to M_nF(\A)$ are Dwyer-Kan fibrations. Consequently, $F_\bullet(\A)$ is Reedy fibrant.
\end{theo}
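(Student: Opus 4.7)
The plan is to verify the two defining conditions of a Dwyer--Kan fibration for each matching map $m_n\co F_n(\A)\to M_nF(\A)$, namely surjectivity on $\operatorname{Hom}$-complexes and the $H^0$-isofibration property. The first condition is essentially formal: given a truncated $A_\infty$-natural transformation $a$ between $(X,f_{\leq n})$ and $(Y,g_{\leq n})$, the top component $a_{0,1,\ldots,n}$ is unconstrained as an element of a mapping complex (there are no higher indexing sets to satisfy a cocycle relation with), so any choice, e.g. $a_{0,1,\ldots,n}=0$, lifts $a$. So the real content lies in showing $m_n$ is an isofibration on $H^0$.

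For the isofibration step, assume we are given an object $(X,f)\in F_n(\A)$ and a homotopy equivalence $a\co (X,f_{\leq n})\to (Y,g)$ in $M_nF(\A)$ with chosen homotopy inverse $\overline{a}$, and we must produce a homotopy equivalence in $F_n(\A)$ that maps to $a$. Lemma~\ref{technical} is the crucial tool: it reduces the problem to producing a \emph{closed} lift $\tilde{a}$ whose degree-zero components $a_0,\ldots,a_n$ are already homotopy equivalences in $\A$ (which they are, since $a$ is a homotopy equivalence in $M_nF(\A)$). So we do not need to construct a lifted homotopy inverse or lifted homotopies; it is enough to lift the target object $(Y,g)$ to an object $(Y,g_{\leq n},g_{0,\ldots,n})$ in $F_n(\A)$ and to lift $a$ to a closed morphism into this object.

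For the object lift we need $g_{0,\ldots,n}\in\A^{-n}(Y_0,Y_n)$ satisfying $d(g_{0,\ldots,n})=(\Delta g+g\circ g)_{0,\ldots,n}$. Write out the truncated composition $\circ'$ (which omits the term containing the yet-to-be-constructed $g_{0,\ldots,n}$) and observe that $(\Delta g+g\circ g)_{0,\ldots,n}$ is closed. The ansatz is to push the relevant quantity through the almost-inverse $\overline{a}_0$ of $a_0$, correcting by a term built from the Kontsevich-style data $r_{Y_0}$ with $d(r_{Y_0})=a_0\overline{a}_0-1_{Y_0}$ provided by Lemma~\ref{klemma}. Verifying that the resulting expression has the desired coboundary amounts to an explicit calculation involving $d(\Delta a+a\circ f-g\circ'a)=(\Delta g+g\circ g)\cdot a_0$, an identity that follows from the Maurer--Cartan equation for $f$ and the closedness of $a$ at indexing subsets of cardinality $\leq n$.

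For the morphism lift we analogously solve for $\tilde{a}_{0,\ldots,n}$ satisfying $d(\tilde{a}_{0,\ldots,n})=(\Delta a+a\circ f-g\circ a)_{0,\ldots,n}$, where now $g$ includes the top component just constructed. The natural guess combines the same expression pushed through $r_{X_0}$ with a correction by $r_{X_0Y_0}$, the higher Kontsevich datum measuring compatibility between $r_{X_0}$ and $r_{Y_0}$ under $a_0$. The main obstacle is precisely this coupled calculation: proving the two Maurer--Cartan coboundary identities in sequence, with careful bookkeeping of signs and of which compositions involve the newly defined $g_{0,\ldots,n}$. All the algebraic room to make this work comes from Lemma~\ref{klemma}, which supplies just enough homotopy data to cancel the obstructions. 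Once both lifts are in place, Lemma~\ref{technical} upgrades the closed lift $\tilde{a}$ to a homotopy equivalence in $F_n(\A)$, completing the proof of Reedy fibrancy.
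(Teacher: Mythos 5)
Your proposal follows essentially the same route as the paper: surjectivity on $\operatorname{Hom}$-complexes is the trivial extension-by-zero argument, and the $H^0$-isofibration property is obtained by lifting first the object via $g_{0,\ldots,n}=(\Delta a+a\circ f-g\circ' a)\overline{a_0}-(\Delta g+g\circ g)r_{Y_0}$ and then the morphism via the $r_{X_0}$, $r_{X_0Y_0}$ correction, using exactly the Kontsevich data of Lemma~\ref{klemma} and the identity $d(\Delta a+a\circ f-g\circ' a)=(\Delta g+g\circ g)a_0$, before invoking Lemma~\ref{technical} to upgrade the closed lift to a homotopy equivalence. The only slip is cosmetic: the top component lives in $\A^{1-n}(Y_0,Y_n)$, not $\A^{-n}(Y_0,Y_n)$.
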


\begin{rem}
In \cite{Hols}, the Dwyer-Kan fibrancy of the matching maps was proved for the case when $\A$ is pretriangulated, by a strategy involving contraction of the cones. This strategy can be in fact performed in the case of arbitrary $\A$, which we demonstrate in Appendix \ref{app}.
\end{rem}

\begin{rem}
In the framework of $\infty$-local systems, the meaning of Reedy fibrancy is the following: if $a$ is a homotopy equivalence between two $\infty$-local systems on the simplex boundary, one of which was restricted from the simplex, then this homotopy equivalence can be lifted to a homotopy equivalence between two $\infty$-local systems on the simplex.
\end{rem}

\appendix 
\section{An alternative proof of Reedy fibrancy}
\label{app}

We now present a proof of Theorem \ref{reedyfib} that does not rely on Lemma \ref{technical}.

\subsection{Contraction of cones and pretriangulated envelopes}
We have to verify that lift of Proposition \ref{lift} is a homotopy equivalence in $F_n(\A)$. While an explicit computation might be possible, it appears to be very cumbersome even in the case $n=1$ (see \cite{Shoi}, Lemma 3.5). There exists, however, a strategy involving contractions of cones (see \cite{Tab2}).

\begin{defi}
For an object $X$ in some DG-category $\A$, its contraction is $b_X \in \A^{-1}(X,X)$ with $d(b_X)=1_X$. 
\end{defi}

Lemma \ref{klemma} precisely states that for any homotopy equivalence $\A$, you can construct a contraction of its cone in $\operatorname{Mod}\A$. However, one does not have to go as far as the whole category of DG-modules. Following \cite{Drinfeld}, recall the construction of the pretriangulated envelope.

\begin{defi}
For a DG-category $\A$, its pretriangulated envelope $\mathsf{Pretr}(\A)$ has one-sided twisted complexes as objects -- namely, those are formal expressions $(\bigoplus_{i=1}^n C_i[r_i],q)$, where $C_i$ are objects of $\A$, $r_i$ are integers and $q$ is a set of morphisms $q_{ij} \in (\A(C_j,C_i)[r_i-r_j])^1$ subject to $q_{ij}=0$ for $i \geq j$ and $dq + q \circ q = 0$. The morphisms are given by 
$$\mathsf{Pretr}(\A)((\bigoplus_{i=1}^n C_i[r_i],q),(\bigoplus_{j=1}^m C'_j[r'_j],q')) = \bigoplus_{i,j} \A(C_j,C'_i)[r'_i-r_j].$$
That is, a degree $k$ morphism $f\co (\bigoplus_{i=1}^n C_i[r_i],q) \to (\bigoplus_{i=1}^m C'_i[r'_i],q')$  is a set of components $f_{ij} \in (\A(C_j, C'_i)[r'_i-r_j])^k$, with matrix multiplication for composition and with differential given by
$$d_{TC}(f) = d(f)+q'\circ f-(-1)^k f \circ q.$$
\end{defi}

There are natural fully faithful embeddings $\A \hookrightarrow \mathsf{Pretr}(\A) \hookrightarrow \operatorname{Mod}\A$. For any $f \in Z^0(\A(X,Y))$, its cone is an object of $\mathsf{Pretr}(\A)$ defined as $\operatorname{Cone}(f)\co=(Y\oplus X[1],q)$ with $q_{12}=f$ (this is compatible with the embedding $\mathsf{Pretr}(\A) \hookrightarrow \operatorname{Mod}\A$). We say that $\A$ already has all the cones if $\operatorname{Cone}(f)$ is always isomorphic to some object in the image of the embedding $\A \hookrightarrow \mathsf{Pretr}(\A)$.  It can be checked that  $\mathsf{Pretr}(\A)$ has all the cones. \\

Note that for DG-categories that have all the cones, we can now prove the following lemma.

\begin{lem}
\label{pretr}
If $\A$ has all the cones, then the matching maps $m_n\co F_n(\A) \to M_nF(\A)$ are fibrations.
\end{lem}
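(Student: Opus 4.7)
The plan is to use the explicit formulas for the object and morphism lifts developed in the main text — which do not require any hypothesis on $\A$ — and then exploit the existence of cones in $\A$ to upgrade the resulting closed lift to a homotopy equivalence. Surjectivity of $m_n$ on $\operatorname{Hom}$ complexes is shown exactly as before and requires no extra hypothesis. For the isofibration condition on $H^0$, fix $(X,f) \in F_n(\A)$, $(Y,g) \in M_nF(\A)$, and a homotopy equivalence $a\co m_n(X,f) \to (Y,g)$ in $M_nF(\A)$. The formulas of the main text (the object lift together with Proposition \ref{lift}) give an extension $\tilde g$ of $g$ with $(Y,\tilde g) \in F_n(\A)$ and a closed degree $0$ morphism $\tilde a\co (X,f) \to (Y,\tilde g)$ in $F_n(\A)$ with $m_n(\tilde a) = a$. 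It remains to show that $\tilde a$ is a homotopy equivalence in $F_n(\A)$.

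Because $\A$ has all cones, so does $F_n(\A)$: the cone of $\tilde a$ has pointwise components $C_i := \operatorname{Cone}(\tilde a_i) = \operatorname{Cone}(a_i)$ in $\A$, which assemble into an object $(C,h) \in F_n(\A)$ whose $A_\infty$-structure is induced from $f$, $\tilde g$ and $\tilde a$ via the usual matrix formulas; the degree-$0$ components of $h$ inherit the homotopy equivalence property from those of $f$ and $\tilde g$ by a standard $5$-lemma argument in $\A$. In any DG-category with cones, a closed degree $0$ morphism is a homotopy equivalence if and only if its cone is contractible, so it suffices to construct $\tilde b \in \operatorname{End}_{F_n(\A)}^{-1}(C,h)$ with $d_{A_\infty}(\tilde b) = 1_{(C,h)}$. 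Each $C_i$ is contractible in $\A$ since $a_i$ is a homotopy equivalence there, so we may choose contractions $s_i \in \A^{-1}(C_i,C_i)$ with $d(s_i) = 1_{C_i}$.

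Build $\tilde b$ inductively on the cardinality of the indexing subset $I = \{i_0 < \ldots < i_k\} \subseteq \{0,\ldots,n\}$. Set $\tilde b_i := s_i$. For $|I| \geq 2$, the equation $(d_{A_\infty}\tilde b)_I = 0$ rearranges as $d(\tilde b_I) = \Phi_I$, where $\Phi_I$ is built from the structure maps $h_J$ and from the already-constructed components $\tilde b_J$ with $J \subsetneq I$. The relation $d_{A_\infty}^2 = 0$, together with the inductive hypothesis that each lower $\tilde b_J$ solves its level of the equation, forces $\Phi_I$ to be $d$-closed; then $\tilde b_I := s_{i_k} \Phi_I$ satisfies $d(\tilde b_I) = 1_{C_{i_k}} \Phi_I = \Phi_I$, as required. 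Iterating up to $I = \{0,\ldots,n\}$ produces a contraction of $(C,h)$ and concludes the argument.

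The main obstacle is the inductive step: one must unpack the $A_\infty$-relations for $(C,h)$ and track signs carefully in order to confirm that $\Phi_I$ is indeed $d$-closed. Once this routine but painstaking verification is done, the existence of the next component is immediate from pointwise contractibility in $\A$, so the entire construction rests cleanly on the hypothesis that $\A$ has all cones.
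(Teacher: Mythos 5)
Your proposal is correct in substance and follows the same overall skeleton as the paper's appendix proof (surjectivity on $\operatorname{Hom}$ complexes is free; take the closed lift $\tilde a$ from Proposition \ref{lift}; use that $F_n(\A)$ inherits cones from $\A$; conclude by contracting $\operatorname{Cone}(\tilde a)$), but you execute the decisive step differently. The paper observes that $\operatorname{Cone}(a)$ already carries a full contraction $b$ in $M_nF(\A)$ (because $a$ is a homotopy equivalence there), notes $m_n(\operatorname{Cone}(\tilde a)) = \operatorname{Cone}(a)$, and then lifts $b$ along $m_n$ in one stroke: only the top component is missing, and it is produced by the explicit closed formula $b_{0,\ldots,n} = b_0(\Delta b + b \circ f + f \circ b)$, so the only verification is a single computation (and one gets the reusable statement that any contraction lifts along $m_n$). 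You instead discard the contraction living on the truncation and rebuild a contraction of $\operatorname{Cone}(\tilde a)$ from scratch, by induction on the cardinality of the index set, using only the pointwise contractions $s_i$ of $\operatorname{Cone}(a_i)$ and an obstruction argument at each stage. Your sketch is sound: the closedness of $\Phi_I$ does follow, most cleanly by noting that $d_{A_\infty}(\tilde b^{\leq k}) - \mathbf{1}$ is $d_{A_\infty}$-closed (the identity transformation is closed) and vanishes on all subsets of size $\leq k$, so the component of its differential at $|I| = k+1$ reduces to $d(\Phi_I) = 0$; this is the cancellation your phrase ``$d_{A_\infty}^2 = 0$ plus the inductive hypothesis'' glosses over, and it is the part you must actually write out, whereas the paper's route replaces the whole induction by one formula. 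Note also that your argument uses only pointwise homotopy invertibility of the $a_i$, not that $a$ is a homotopy equivalence in $M_nF(\A)$; in effect your induction reproves, in this special case, the pointwise criterion of Lemma \ref{technical}, so it trades the appendix's economy (borrowing Holstein's explicit lift of a contraction) for a self-contained but longer obstruction-theoretic computation.
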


\begin{proof}
We are left to check that if $\tilde{a}$ is a closed lift of a homotopy equivalence $a\co(X,f_{\leq n}) \to (Y,g)$, then $\tilde{a}$ is a homotopy equivalence in $F_n(\A)$. We notice that if $\A$ has all the cones, then $F_n(\A)$ and $M_nF(\A)$ also have all the cones. So $\operatorname{Cone}(a)$ is an object of $M_nF(\A)$ which (by Lemma \ref{klemma}) has a contraction $b$. Note that for any functor, the induced functor on pretriangulated envelopes respects cones, so $m_n(\operatorname{Cone}(\tilde{a})) = \operatorname{Cone}(a)$. Lifting $b$ to a contraction of $\operatorname{Cone}(\tilde{a})$ will then show that $\tilde{a}$ is a homotopy equivalence. And indeed, any contraction can be lifted along $m_n$. Let $b$ be a contraction of $(X,f_{\leq n})$. The the lift, as shown in \cite{Hols},  is obtained by setting
$$b_{0,\ldots, n} = b_0(\Delta b + b \circ f + f \circ b). $$
\end{proof}

We now show how the assumption of $\A$ having all the cones can be omitted. In \cite{Tab2}, this was done for the case $n=1$ via a quasiequivalence $\mathsf{Pretr}(F_1(\A)) \simeq F_1(\mathsf{Pretr}(\A))$.

\subsection{Proof of Theorem \ref{reedyfib}}



Consider the following commutative square, where the horizontal arrows are fully faithful embeddings given by compositions of $F_n$ (respectively $M_nF$) with natural embeddings $\A \hookrightarrow \mathsf{Pretr}(\A)$: 

$$\xymatrix{F_n(\A) \ar@{^(->}[r] \ar[d] & F_n(\mathsf{Pretr}(\A)) \ar[d]\\
M_nF (A) \ar@{^(->}[r] & M_nF(\mathsf{Pretr}(\A))}$$

For a homotopy equivalence $a\co (X,f_{\leq n}) \to (Y,g)$ in $M_nF(\A)$, we have constructed in Proposition \ref{lift} its closed lift along the left vertical arrow. Under embeddings, this is also a legitimate lift along the right vertical arrow. As the category $\mathsf{Pretr}(\A)$ has all the cones, we know from Lemma \ref{pretr} that any closed lift of a homotopy equivalence is a homotopy equivalence. So we are left to observe that embeddings respect homotopy equivalences, and that if a morphism is a homotopy equivalence in the larger category then it is also a homotopy equivalence in the smaller category. This concludes the proof.

\section{Erratum}
Contrary to what was stated in our paper at page 2, the proof of Proposition 3.3 in \cite{Tab2} did not contain any mathematical inaccuracies or flaws. Rather, it was a question of exposition: a trivial computation was omitted without explicitly mentioning the fact, and this led the author of \cite{Hols} to wrong conclusions. The proof in \cite{Hols} thus indeed contained a gap that we fixed.

\end{document}